\newtheorem{theorem}{Theorem}[section]
\newtheorem{lemma}[theorem]{Lemma}
\newtheorem{definition}[theorem]{Definition}
\numberwithin{equation}{section}
\newcommand{\disjointunion}{{\stackrel{\centerdot}{\cup}}}
\begin{document}

\title{Parking Cars after a Trailer}
\author{Richard Ehrenborg and Alex Happ}

\date{}

\maketitle

\begin{abstract}
Recently, the authors 
extended the notion of parking functions
to parking sequences, which include
cars of different sizes, and proved
a product formula for the number of such sequences.
We here give a refinement of that
result involving parking the cars after a trailer.
The proof of the refinement uses a multi-parameter
extension of the Abel--Rothe polynomial
due to Strehl.
\end{abstract}

\section{The result}

Parking sequences were
introduced in~\cite{Ehrenborg_Happ}
as an extension of the classical notion
of parking functions, where we now take into account
parking cars of different sizes.
This extension differs from other
extensions of parking
functions
~\cite{Chebikin_Postnikov,Kung_Yan_I,Kung_Yan_II,Kung_Yan_III,Yan}
since
the parking sequences are not
invariant under permuting the entries.
The main result in~\cite{Ehrenborg_Happ}
is that the number of
parking sequences is given by the product
\begin{equation}
(y_{1}+n)
\cdot (y_{1}+y_{2}+n-1)
\cdots (y_{1}+\cdots+y_{n-1}+2),
\label{equation_no_trailer}
\end{equation}
where the $i$th car has length $y_{i}$.
Note that this reduces to the classical
$(n+1)^{n-1}$ result of 
Konheim and Weiss~\cite{Konheim_Weiss}
when setting $y_{1} = y_{2} = \cdots = y_{n} = 1$.
The proof in~\cite{Ehrenborg_Happ}
is an extension of the circular argument by
Pollak; see~\cite{Riordan}.

We now introduce a refinement of the result
by adding a trailer.
\begin{definition}
Let there be $n$ cars $C_{1},\ldots,C_n$
of sizes $y_{1},\ldots,y_{n}$, 
where $y_{1}, \ldots, y_{n}$ are positive integers.
Assume there are $z-1 + \sum_{i=1}^{n} y_{i}$ spaces in a row,
where the trailer occupies the $z-1$ first spaces.
Furthermore, let car $C_{i}$ have the preferred spot~$c_{i}$.
Now let the cars
in the order~$C_{1}$ through $C_{n}$
park according to the following rule:
\begin{quote}
{\normalsize 
Starting at position $c_{i}$,
car $C_{i}$
looks for the first empty spot $j \geq c_{i}$.
If the spaces~$j$ through $j+y_{i}-1$ are empty, then car $C_{i}$
parks in these spots.
If any of the spots
$j+1$ through $j+y_{i}-1$ is already occupied,
then there will be a collision, and the result is not a parking sequence.
}
\end{quote}
Iterate this rule for all the cars
$C_{1}, C_{2}, \ldots, C_{n}$.
We call $(c_{1},\ldots, c_n)$ a \emph{parking sequence}
for $\vec{y}=(y_{1},\ldots,y_{n})$
if all $n$ cars can park without any collisions
and without leaving
the $z-1 + \sum_{i=1}^{n} y_{i}$ parking spaces.
\end{definition}

As an example, consider three cars of sizes
$\vec{y}=(2,2,1)$,
a trailer of size $3$, that is $z=4$,
and the preferences $\vec{c}=(5,6,2)$.
Then there are $2+2+1=5$ available parking spaces after the trailer,
and the final configuration of the cars is
\[\begin{tikzpicture}[scale=1.2]
\draw (0,0) -- (8,0);
\foreach \x in {0,...,8}
\draw (\x,0) -- (\x,0.5);
\foreach \x in {1,...,8}
\node[gray] at (\x-0.5,-0.2) {\small$\x$};
\draw[fill=gray!20] (0.1,0.1) rectangle (2.9,0.45);
\draw[fill=gray!20] (3.1,0.1) rectangle (3.9,0.45);
\draw[fill=gray!20] (4.1,0.1) rectangle (5.9,0.45);
\draw[fill=gray!20] (6.1,0.1) rectangle (7.9,0.45);
\node at (1.5,0.265) {\footnotesize $T$};
\node at (3.5,0.265) {\footnotesize $C_{3}$};
\node at (5,0.265) {\footnotesize $C_{1}$};
\node at (7,0.265) {\footnotesize $C_{2}$};
\end{tikzpicture}\]
All cars are able to park, so this yields a parking sequence.

We now have the main result.
Observe that when setting $z=1$, this expression reduces
to equation~\eqref{equation_no_trailer}.
\begin{theorem}
The number of parking sequences $f(\vec{y};z)$ for car sizes
$\vec{y}=(y_{1},\ldots,y_{n})$
and a trailer of length $z-1$
is
given by the product
\[
f(\vec{y};z)=
z \cdot (z+y_{1}+n-1)
\cdot (z+y_{1}+y_{2}+n-2)
\cdots (z+y_{1}+\cdots+y_{n-1}+1).\]
\label{theorem_parking}
\end{theorem}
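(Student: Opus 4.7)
The plan is to reformulate $f(\vec{y};z)$ as a polynomial in $z$ weighting parking sequences for $\vec{y}$ with no trailer by $z^k$, where $k$ is the number of preferences equal to $1$, and then to establish the product formula via Strehl's multi-parameter extension of the Abel--Rothe identity.

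I would first observe that when $1 \leq c_i \leq z$, car $C_{i}$ begins its search at position $z$ regardless of the exact value of $c_{i}$, because the trailer occupies spots $1,\ldots,z-1$. Collapsing $\{1,\ldots,z\} \ni c_{i}$ to a single value $c_{i}' = 1$ and sending $c_{i} = z + k$ to $c_{i}' = k + 1$ for $k \geq 1$ sets up a $z^k$-weighted bijection onto parking sequences for $\vec{y}$ with no trailer, yielding
\[
f(\vec{y};z) \;=\; \sum_{(c_1', \ldots, c_n')} z^{|\{i\,:\,c_i'=1\}|},
\]
where the sum is over parking sequences for $\vec{y}$ with no trailer. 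Consequently $f(\vec{y};z)$ is a polynomial in $z$ of degree $n$, with constant term $0$ (spot $1$ can be filled only by a pref-$1$ car) and leading coefficient $1$ (realised by the sequence with all preferences equal to $1$).

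Next, I would partition this sum by the subset $S = \{i : c_i' = 1\}$ and, for each $S$, count parking sequences whose pref-$1$ set is exactly $S$. Combined with the no-trailer product \eqref{equation_no_trailer} applied to suitably reduced data, this produces an expression
\[
f(\vec{y};z) \;=\; \sum_{S \subseteq [n]} z^{|S|} \, R(S,\vec{y}),
\]
in which each $R(S,\vec{y})$ is an explicit product of linear factors in the $y_i$, the partial sums $s_i$, and $n$. The sum over $S$ is then arranged to match the hypothesis of Strehl's multi-parameter Abel--Rothe identity, whose application collapses it to the claimed product $z \prod_{i=1}^{n-1}(z + s_{i} + n - i)$.

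The main obstacle is the middle step. Pref-$1$ cars do not automatically stack at the front of the parking strip, because cars arriving earlier with larger preferences can occupy intermediate spots and push the pref-$1$ cars further to the right. Organising the enumeration so that $R(S,\vec{y})$ takes the precise multilinear shape demanded by Strehl's identity is the core combinatorial challenge; once the parameter matching is in place, the identity performs the remaining algebraic collapse.
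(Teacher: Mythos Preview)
Your opening reduction is correct and elegant: every preference $c_i\in\{1,\ldots,z\}$ behaves identically (the trailer forces the search to start at spot~$z$), so the shift $c_i\mapsto c_i'$ really does give
\[
f(\vec{y};z)=\sum_{(c_1',\ldots,c_n')} z^{\,|\{i:\,c_i'=1\}|}
\]
over no-trailer parking sequences, and the remarks about degree, leading coefficient and constant term follow.

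The gap is exactly where you say it is, and it is a genuine one rather than a routine detail. You assert that $R(S,\vec{y})$ will come out as ``an explicit product of linear factors'' matching the shape Strehl's identity requires, but you neither state the formula nor indicate how to obtain it. In fact there is no reason to expect each individual $R(S,\vec{y})$ to be a product: already for small $n$ the values $R(S,\vec{y})$ are not of the form required for the terms in Strehl's convolution, and the product structure in the theorem only emerges after summing over all $S$ of a given size. Your own diagnosis---that pref-$1$ cars need not sit contiguously at the front, so fixing $S$ does not decouple the problem into independent pieces---is precisely why the decomposition by $S=\{i:c_i'=1\}$ is unlikely to produce terms that Strehl's identity can eat. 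As written, the proposal is a plan whose hardest step is left open.

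The paper avoids this difficulty by decomposing differently. Instead of stratifying by which cars have preference~$1$, it conditions on the position of the \emph{last} car $C_{n+1}$: the cars that end up to its left form a set $L$ and must themselves be a parking sequence with the trailer, while the cars to its right form $R=[n]\setminus L$ and must be a parking sequence with no trailer. This yields the recurrence
\[
f(\vec{y},y_{n+1};z)=\sum_{L\disjointunion R=[n]}\Bigl(z+\sum_{l\in L}y_l\Bigr)\,f(\vec{y}_L;z)\,f(\vec{y}_R;1),
\]
whose terms are already products, and which matches Strehl's convolution for $t$ and $s$ on the nose after the substitution ${\bf x}={\bf 1}$. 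The induction then closes immediately. If you want to rescue your approach, the natural move is to replace the stratification by $S$ with a stratification that genuinely splits the strip into two independent parking problems; the last-car split does exactly that.
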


\section{The proof}

The first part of our proof comes
from the following identity.
Let $\disjointunion$
denote disjoint union of sets.

\begin{lemma}
    The number of parking sequences for car sizes 
    $(y_{1},\ldots,y_{n},y_{n+1})$ 
    and a trailer of length $z-1$ 
    satisfies the recurrence
        \[f(\vec{y},y_{n+1};z)
        =
        \sum_{L \disjointunion R=\{1,\ldots,n\}}
        \left(z+\sum_{l\in L} y_l\right)
        \cdot f(\vec{y}_L;z) \cdot f(\vec{y}_R;1),\]
    where $\vec{y}_S=(y_{s_1},\ldots,y_{s_k})$ for 
    $S=\{s_{1}<s_{2}<\cdots<s_{k}\}\subseteq\{1,\ldots,n\}$.
 \label{lemma}
\end{lemma}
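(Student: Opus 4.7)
The plan is to construct a bijection between parking sequences for $(\vec{y},y_{n+1};z)$ and quadruples consisting of a partition $L\disjointunion R=\{1,\ldots,n\}$, a parking sequence for $\vec{y}_L$ with trailer length $z-1$, a parking sequence for $\vec{y}_R$ with no trailer, and an integer in $\{1,\ldots,z+\sum_{l\in L}y_l\}$. The starting observation is that the total car length $\sum_{i=1}^{n+1}y_i$ equals the number of spaces available after the trailer, so in any parking sequence the cars tile $[z,z-1+\sum_{i=1}^{n+1}y_{i}]$ without gaps. Let $p$ denote the starting position of $C_{n+1}$ in the final configuration, let $L$ be the indices of cars among $\{1,\ldots,n\}$ ending at positions strictly less than $p$, and let $R$ be the complementary set. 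The tiling then forces the $L$-cars to occupy $[z,p-1]$ exactly, so $p=z+\sum_{l\in L}y_l$.

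The bijection sends $(c_1,\ldots,c_{n+1})$ to the partition $L\disjointunion R$ together with the restricted sequence $(c_l)_{l\in L}$, the shifted sequence $(c_r-p-y_{n+1}+1)_{r\in R}$, and $c_{n+1}$ itself. Summing over all $L\disjointunion R=\{1,\ldots,n\}$ will then yield the stated recurrence, provided we verify that $(c_l)_{l\in L}$ ranges exactly over parking sequences for $\vec{y}_L$ with trailer $z-1$, the shifted $R$-preferences range over parking sequences for $\vec{y}_R$ with no trailer, and $c_{n+1}$ ranges over $\{1,\ldots,p\}$.

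The hard part will be showing that the parking processes in these three pieces decouple, both for the forward map and its inverse. When a car $C_l$ with $l\in L$ parks in the combined problem, any $R$-cars already parked lie at positions $\geq p+y_{n+1}$ and therefore cannot affect the scan from $c_l\leq p-1$, so $C_l$ behaves identically to how it does in its sub-problem. For $R$-cars, the key lemma to establish is that $c_r\geq p+y_{n+1}$ necessarily: at the moment $C_r$ parks, the interval $[p,p+y_{n+1}-1]$ is still empty (because $C_{n+1}$ parks last), so any preference $c_r<p+y_{n+1}$ would force $C_r$ to end up before position $p+y_{n+1}$ and contradict $r\in R$. Once this is established, $R$-cars behave identically to the shifted sub-problem. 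Finally, when $C_{n+1}$ arrives the trailer together with the $L$-cars completely fill $[1,p-1]$ while $[p,p+y_{n+1}-1]$ is empty, so $c_{n+1}$ may be any value in $\{1,\ldots,p\}$ and $C_{n+1}$ is forced to park at $p$. The inverse direction follows from the same decoupling argument, applied to preferences assembled from the given sub-problem data.
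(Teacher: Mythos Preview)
Your argument is correct and follows the same route as the paper: classify parking sequences by the final position of $C_{n+1}$, let $L$ and $R$ be the indices of cars landing to its left and right, and observe that the three pieces (an $L$-parking problem with trailer $z-1$, an $R$-parking problem with no trailer, and the choice of $c_{n+1}$) decouple. Your version is more explicit than the paper's sketch---you spell out the shift $c_r\mapsto c_r-p-y_{n+1}+1$, verify $c_r\ge p+y_{n+1}$, and argue bijectivity---but the underlying decomposition is identical.
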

\begin{proof} 
Consider the situation required 
for the last car $C_{n+1}$ to park successfully:
   \begin{itemize}
   	\item[--]
	 Car $C_{n+1}$ must see, 
	   to the left of its vacant spot, 
	   the trailer along with a subset of the cars 
	   labeled with indices $L$ occupying the first 
	   $z-1+\sum_{l\in L} y_l$ spots.
           Hence, the restriction $\vec{c}_L$ 
	   of $\vec{c}=(c_1,c_2,\ldots,c_{n+1})$ 
	   to the indices in $L$ 
	   must be a parking sequence for $\vec{y}_L$ 
	   and trailer of length $z-1$.
	   This can be done in $f(\vec{y}_L;z)$ possible ways.
	\item[--]
	 Car  $C_{n+1}$ must have a preference $c_{n+1}$ 
	   that lies in the range $[1,z+\sum_{l\in L}y_l]$.
	\item[--]
	 Car $C_{n+1}$ must see, 
	   to the right of its vacant spot, 
	   the complementary subset of cars 
	   labeled with indices $R=\{1,2,\ldots,n\}- L$ 
	   occupying the last $\sum_{r\in R} y_r$ spots. 
	   These cars must have parked successfully 
	   with preferences $\vec{c}_R$ and no trailer, that is,
	   $z=1$. This is enumerated by $f(\vec{y}_R;1)$.
   \end{itemize}
Now summing over all decompositions
$L \disjointunion R=\{1,2,\ldots,n\}$, the recursion follows.
\end{proof}

The next piece of the proof of Theorem~\ref{theorem_parking}
utilizes a multi-parameter
convolution identity
due to Strehl~\cite{Strehl}.
Let
${\bf x} = (x_{i,j})_{1 \leq i < j}$
and
${\bf y} = (y_{j})_{1 \leq j}$ be two infinite sets of parameters.
For a finite subset~$A$ of the positive integers,
define the two sums
$$
{\bf x}_{> a}^{A}
=
\sum_{j \in A, j > a} x_{a,j}
\:\:\:\: \text{ and }\:\:\:\:
{\bf y}_{\leq a}^{A}
=
\sum_{j \in A, j \leq a} y_{j} .
$$
Define the polynomials
$t_{A}({\bf x}, {\bf y}; z)$
and
$s_{A}({\bf x}, {\bf y}; z)$
by
\begin{align*}
t_{A}({\bf x}, {\bf y}; z)
& =
z \cdot
\prod_{a \in A-\max(A)}
(z + {\bf y}_{\leq a}^{A} + {\bf x}_{> a}^{A}) ,
\\
s_{A}({\bf x}, {\bf y}; z)
& =
\prod_{a \in A}
(z + {\bf y}_{\leq a}^{A} + {\bf x}_{> a}^{A}) .
\end{align*}
Note that, when $A$ is the empty set,
we set
$t_{A}({\bf x}, {\bf y}; z)$ to be $1$.
We directly have that
\begin{equation}
(z + {\bf y}_{\leq \max(A)}^{A})
\cdot
t_{A}({\bf x}, {\bf y}; z)
 =
z
\cdot
s_{A}({\bf x}, {\bf y}; z) .
\label{equation_easy_one}
\end{equation}

Now Theorem~1, equation~(6)
in~\cite{Strehl} states:
\begin{theorem}[Strehl]
The polynomials
$s_{L}({\bf x}, {\bf y}; z)$
and
$t_{R}({\bf x}, {\bf y}; w)$
satisfy the following convolution identity:
\begin{equation}
s_{A}({\bf x}, {\bf y}; z+w)
=
\sum_{L \disjointunion R = A}
s_{L}({\bf x}, {\bf y}; z)
\cdot
t_{R}({\bf x}, {\bf y}; w)   .
\label{equation_Sheffer}
\end{equation}
\label{theorem_Strehl}
\end{theorem}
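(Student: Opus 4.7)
The plan is to prove Strehl's convolution identity by induction on $n = |A|$, with~\eqref{equation_easy_one} as the key algebraic tool relating the $s$- and $t$-polynomials. The base case $n = 0$ is immediate: both sides of~\eqref{equation_Sheffer} equal $1$, since $s_\emptyset = 1$ and the only pair $(L, R)$ with $L \disjointunion R = \emptyset$ contributes $s_\emptyset(z) \cdot t_\emptyset(w) = 1$.

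For the inductive step, let $m = \max(A)$ and $A' = A - \{m\}$. For any $B \subseteq A'$, write $B^+ := B \cup \{m\}$. Since $m > b$ for every $b \in B$ and $m = \max(B^+)$, we have $\mathbf{y}_{\leq b}^{B^+} = \mathbf{y}_{\leq b}^{B}$ and $\mathbf{x}_{>b}^{B^+} = \mathbf{x}_{>b}^{B} + x_{b, m}$ for each $b \in B$. Writing $Y_B := \sum_{b \in B} y_b$ and introducing the auxiliary polynomial
\[
\hat{s}_B(u) := \prod_{b \in B}\bigl(u + \mathbf{y}_{\leq b}^B + \mathbf{x}_{>b}^B + x_{b, m}\bigr),
\]
equation~\eqref{equation_easy_one} applied to $B^+$ yields both $s_{B^+}(u) = (u + y_m + Y_B)\,\hat{s}_B(u)$ and $t_{B^+}(u) = u \cdot \hat{s}_B(u)$. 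I would then split the sum on the right-hand side of~\eqref{equation_Sheffer} according to whether $m \in L$ or $m \in R$, use these formulas to rewrite the summands in terms of $\hat{s}$ over subsets of $A'$, and match them against the factorization $s_A(z+w) = (z + w + y_m + Y_{A'})\,\hat{s}_{A'}(z+w)$ of the left-hand side. Since $y_m$ enters linearly in both sides and does not appear in $\hat{s}_B$ for $B \subseteq A'$, comparing the coefficients of $y_m$ and of $y_m^0$ reduces the identity to two statements on $A'$ involving only $\hat{s}$, $s$, and $t$ polynomials.

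The principal obstacle is that $\hat{s}_B$ is \emph{not} $s_B$ for any globally modified parameter set: the shift $x_{b,m}$ attaches to the specific element $b$, rather than propagating through $\mathbf{y}_{\leq b}^B$ as a symmetric sum, so the inductive hypothesis for $A'$ does not apply verbatim. My strategy for overcoming this is to transport the two reduced identities back into assertions about the convolution~\eqref{equation_Sheffer} for sets of the form $B^+ \subseteq A$ with $B \subsetneq A'$ --- each strictly smaller than $A$, so the inductive hypothesis \emph{does} apply --- by exploiting the exchange formulas $\hat{s}_B(u) = t_{B^+}(u)/u$ and $(u + y_m + Y_B)\,\hat{s}_B(u) = s_{B^+}(u)$. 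A careful reorganization of the two sums should then close the induction, with the only term falling outside the inductive range (corresponding to $B = A'$, i.e., $B^+ = A$) absorbing into the left-hand side by construction rather than requiring a fresh invocation.

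As a backup in case this algebraic route becomes unwieldy, a combinatorial proof also appears within reach: interpret $s_A(z + w)$ as the weight generating function over all functions $f \colon A \to A \cup \{0_z, 0_w\}$ (with $f(a) = 0_z$ contributing weight $z$ and $f(a) = 0_w$ contributing $w$), let $L$ be the set of elements whose forward $f$-orbit reaches $0_z$ and $R$ its complement, and use $\max R$ as a canonical break-point for any cycle trapped in the ``$w$-component'' to recover the factor $w$ and the truncation over $R - \{\max R\}$ appearing in $t_R(w)$.
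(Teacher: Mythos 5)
The paper does not actually prove Theorem~\ref{theorem_Strehl}: it cites Strehl and only sketches his combinatorial argument (interpret $s_A$ and $t_A$ as weight sums over functions, transport them by a bijection to labeled rooted trees with $x_{i,j}$ recording ascents and $y_j$ descents, and split a tree in two). Your algebraic induction is therefore a genuinely different route, and its scaffolding is sound: the base case, the split of the sum according to whether $m=\max(A)$ lies in $L$ or in $R$, the factorizations $s_{B\cup\{m\}}(u)=(u+y_m+\sum_{b\in B}y_b)\,\hat{s}_B(u)$ and $t_{B\cup\{m\}}(u)=u\,\hat{s}_B(u)$, and the extraction of the coefficients of $y_m^0$ and $y_m^1$ are all correct.

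The gap is at the decisive step. The coefficient of $y_m$ yields the identity $\hat{s}_{A'}(z+w)=\sum_{B\disjointunion C=A'}\hat{s}_B(z)\,t_C(w)$, whose left-hand side equals $t_A(z+w)/(z+w)$; it still refers to the full set $A$, it has the same depth as the theorem being proved, and its summands involve the modified polynomials $\hat{s}_B$, which you correctly observe are not $s$-polynomials for any re-parametrization. Hence it is not an instance of the inductive hypothesis for any proper subset of $A$, and the proposed remedy --- ``transporting'' the reduced identities back to convolutions over the sets $B\cup\{m\}$ with $B\subsetneq A'$ and letting the $B=A'$ term ``absorb into the left-hand side'' --- is asserted, not performed: no reorganization of the sums is exhibited, and I do not see one that terminates in the inductive hypothesis. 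The delicacy is real: if one tries to close the induction by strengthening the statement so that \emph{every} factor indexed by $b$ carries an extra additive parameter $u_b$ (in both the $s$- and $t$-polynomials), the resulting identity is already false for $|A|=2$, where the two sides differ by $w\,u_{\min A}$. The version you actually need (extra parameters on the $s$-side only, ordinary $t$'s) appears to be true, but proving \emph{that} is essentially the theorem itself in disguise, so the induction has not been reduced to anything smaller. Your backup combinatorial sketch points toward Strehl's genuine proof, but it too omits the essential content, namely the precise weighting of functions (equivalently, of ascents and descents in the associated trees) under which the two components of the split structure acquire the weights $s_L(z)$ and $t_R(w)$; without that, the factor $w$ and the truncation over $R-\max(R)$ cannot be verified.
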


Strehl first interprets 
$s_A({\bf x},{\bf y};z)$ 
and 
$t_A({\bf x},{\bf y};z)$ 
as sums of weights on functions, 
then translates these via a bijection 
to sums of weights on rooted, labeled trees 
where the $x_{i,j}$'s record ascents, 
and the $y_j$'s record descents. 
The proof of~(\ref{equation_Sheffer}) 
then follows from the structure 
inherent in splitting a tree into two. 
A similar result using the same bijection 
was discovered by E\v{g}ecio\v{g}lu and Remmel
in~\cite{E_Remmel}.

\begin{proof}[Proof of Theorem~\ref{theorem_parking}]
The proof follows
from noticing that our proposed expression for 
$f(\vec{y};z)$ is Strehl's polynomial
$t_{\{1,2,\ldots,n\}}({\bf1},{\bf y};z)$. 
By induction we obtain
\begin{align*}
f(\vec{y},y_{n+1};z)
& =
        \sum_{L \disjointunion R=\{1,2,\ldots,n\}}
        \left(z+\sum_{l\in L} y_l\right)
        \cdot f(\vec{y}_L;z)\cdot f(\vec{y}_R;1) \\
& =
\sum_{L \disjointunion R = \{1,2,\ldots,n\}}
(z + {\bf y}_{\leq \max(L)}^{L})
\cdot
t_{L}({\bf 1}, {\bf y}; z)
\cdot
t_{R}({\bf 1}, {\bf y}; 1) \\
& =
\sum_{L \disjointunion R = \{1,2,\ldots,n\}}
z
\cdot
s_{L}({\bf 1}, {\bf y}; z)
\cdot
t_{R}({\bf 1}, {\bf y}; 1) \\
& =
z
\cdot
s_{\{1,2,\ldots,n\}}({\bf 1}, {\bf y}; z+1) \\
& =
t_{\{1,2,\ldots,n+1\}}({\bf 1}, {\bf y}; z) ,
\end{align*}
where we used 
the recursion in
Lemma~\ref{lemma},
equation~\eqref{equation_easy_one} 
and
Theorem~\ref{theorem_Strehl}.
\end{proof}

\section{Concluding remarks}

The polynomial $t_{A}({\bf x}, {\bf y}; z)$
satisfies the following convolution identity;
see~\cite[Equation~(7)]{Strehl},
\begin{equation}
t_{A}({\bf x}, {\bf y}; z+w)
=
\sum_{B \disjointunion C = A}
t_{B}({\bf x}, {\bf y}; z)
\cdot
t_{C}({\bf x}, {\bf y}; w)   .
\label{equation_binomial}
\end{equation}
Hence it is suggestive to think of this
polynomial as of binomial type
and the polynomial
$s_{A}({\bf x}, {\bf y}; w)$ as an
associated Sheffer sequence;
see~\cite{Rota_Kahaner_Odlyzko}.
When setting all the parameters ${\bf x}$
to be constant and also the parameters
${\bf y}$ to be constant, we obtain
the classical
Abel--Rothe polynomials.
Hence it is natural to ask if
other sequences of binomial type and
their associated Sheffer sequences
have multi-parameter extensions.
Since the Hopf algebra ${\bf k}[x]$
explains sequences of binomial type,
one wonders if there is a Hopf algebra
lurking in the background
explaining equations~\eqref{equation_binomial}
and~\eqref{equation_Sheffer}.

\vspace{-2mm}
\section*{Acknowledgment}

Both authors were partially supported by
National Security Agency grant~H98230-13-1-0280.
The first author wishes to thank the Mathematics Department of
Princeton University where this work was completed.

\newcommand{\journal}[6]{#1, #2, {\it #3} {\bf #4} (#5), #6.}
\newcommand{\toappear}[3]{#1, #2, to appear in {\it #3}.}
\small

\noindent 
{\em Department of Mathematics,
University of Kentucky,
Lexington, KY 40506,} \\
{richard.ehrenborg@uky.edu}, {alex.happ@uky.edu}

\end{document}